\documentclass[11pt]{article}

\usepackage[margin=1in]{geometry}
\usepackage[T1]{fontenc}
\usepackage{lmodern}
\usepackage{amsmath,amssymb,amsthm}
\usepackage{xcolor}
\usepackage{graphicx}
\usepackage{tikz}
\usepackage{hyperref}
\hypersetup{hidelinks}
\usetikzlibrary{arrows.meta}

\definecolor{CaltechOrange}{RGB}{255,108,12}
\definecolor{CaltechOrangeLight}{RGB}{255,236,223}
\definecolor{CaltechGray}{RGB}{118,119,123}
\definecolor{CaltechDark}{RGB}{0,59,76}

\newcommand{\rin}{r_{\mathrm{in}}}
\newcommand{\rout}{r_{\mathrm{out}}}
\newcommand{\Aann}{A_{\mathrm{ann}}}
\newcommand{\Adem}{A_{\mathrm{dem}}}

\theoremstyle{plain}
\newtheorem{theorem}{Theorem}
\newtheorem{lemma}[theorem]{Lemma}
\newtheorem{proposition}[theorem]{Proposition}
\theoremstyle{definition}
\newtheorem{definition}[theorem]{Definition}

\title{Optimal pebbling of the hypercube}
\author{Lior Pachter\thanks{The author used GPT-5.5 for assistance with the mathematics and in drafting an initial version of this manuscript. The author developed the proof strategy, verified the mathematical arguments, edited the manuscript, and takes full responsibility for the content.}\\
\small Division of Biology and Biological Engineering\\
\small Department of Computing and Mathematical Sciences\\
\small California Institute of Technology\\
\small Correspondence should be addressed to lpachter@caltech.edu}
\date{\today}

\begin{document}
\maketitle

\begin{abstract}
We prove that the optimal pebbling number of the $n$-dimensional hypercube is
\[
o(Q_n)\,=\,\Theta\!\left(\left(\frac43\right)^n\right).
\]
\end{abstract}

\section*{Introduction}

A pebbling distribution on a graph $G$ assigns a nonnegative integer number of pebbles to
each vertex.  A pebbling move removes two pebbles from one vertex and places one pebble on
an adjacent vertex.  Graph pebbling was introduced by Chung~\cite{Chung}, who considered
pebbling of the $n$-dimensional hypercube $Q_n$; for a survey and general background on
pebbling see~\cite{HurlbertSurvey}.
\label{def:hypercube}

A pebbling distribution is solvable if, for every target vertex, some sequence
of pebbling moves places at least one pebble on the target.  The optimal pebbling number
$o(G)$ was introduced by Pachter, Snevily and Voxman~\cite{PSV}; it is the minimum total
size of a solvable distribution on $G$.
\label{def:pebbling}
The problem of determining $o(Q_n)$ was raised in~\cite{PSV}, and a standard
weight-function argument gives the lower bound
$o(Q_n)\ge (4/3)^n$.  Moews proved the asymptotic upper bound~\cite{Moews}
\[
o(Q_n)\,\le\, (4/3)^{n+O(\log n)}.
\]
Explicit constructions were also studied by Herscovici, Hester, and Hurlbert,
who obtained an $O(1.3763^n)$ bound using product methods with arbitrary target
distributions~\cite{HHH}, and by Herscovici, who obtained an $O(1.34^n)$ bound
using error-correcting codes~\cite{Herscovici}.  Fu, Huang, and Shiue gave the
stronger explicit upper bound~\cite{FHS}
\[
o(Q_n)\,\le\, O\!\left(n^{3/2}(4/3)^n\right).
\]
We prove that the exponential lower bound is sharp up to a constant factor.

\begin{theorem}[Main theorem]\label{thm:main}
The optimal pebbling number of the $n$-dimensional hypercube satisfies
\[
o(Q_n)\,=\,\Theta\!\left(\left(\frac43\right)^n\right).
\]
\end{theorem}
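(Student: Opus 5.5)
The plan has two halves. The lower bound $o(Q_n)\ge (4/3)^n$ is the standard weight-function argument mentioned in the introduction. I will recall it explicitly because its equality analysis guides the construction.

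\textbf{Lower bound.} For a distribution $p$ and a target $t$, define the potential $w_t(p)=\sum_v p(v)2^{-d(v,t)}$. A pebbling move never increases $w_t$, so solvability forces $w_t(p)\ge 1$ for every $t$. Summing over all $2^n$ targets gives
\[
2^n\le \sum_t w_t(p)=|p|\sum_{d}\binom nd 2^{-d}=|p|\left(\tfrac32\right)^n,
\]
so $|p|\ge (4/3)^n$. The loss in this argument comes from two sources. First, weight may be non-uniform across targets. Second, a weight of $1$ need not actually be realizable by moves. Most of $\sum_d\binom nd2^{-d}$ is concentrated at $d\approx n/3$, in a window of width $O(\sqrt n)$. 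So in an efficient distribution, each target should be served by pebbles lying in an annulus $\rin\le d(v,t)\le\rout$ around $n/3$.

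\textbf{Upper bound: construction.} I will take a random set $S\subseteq V(Q_n)$ and put a pile of $2^m$ pebbles on each vertex of $S$. The density $q$ of $S$ and the pile size $m$ will be tuned so that $|p|=q2^n2^m=C(4/3)^n$ for a constant $C$. For each target $t$, I will lower-bound the expected weight contributed by piles in the annulus $\Aann(t)=\{v:\rin\le d(v,t)\le\rout\}$. By the computation above, this is a constant fraction of $C$. The real content is replacing the condition ``weight $\ge 1$'' by something that moves can realize.

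\textbf{Realizability lemma.} Suppose a target $t$ is the root of a binary ``merging tree'' in $Q_n$. Its leaves are vertices $u_1,\dots,u_{2^j}$ at distance $d$ from $t$. Each pair of sibling subtrees merges by following coordinate-flip paths, with a shared trunk of length $d-j$ and branchings at $j$ levels. If each leaf carries $2^{d-j}$ pebbles, then $t$ can be reached. Note that this tree-weight is exactly $1$, so nothing is wasted. Then $2^m$-piles at distance $d=m+j$ are usable provided we can find $2^j$ of them arranged in such a tree. The demand at $t$ is
\[
\Adem(t)=\#\{\text{leaf sets needed}\},
\]
and I will show that the number of such trees embedded in $S\cap\Aann(t)$ exceeds $\Adem(t)$ for every $t$. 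To get this, I will choose $j$ so that, in expectation, there are about $C$ such trees. Because the hypercube offers many choices of coordinate order, the tree can be built greedily level by level: at each branching level, $\Theta(n)$ coordinates remain available, and we only need one child subtree on each side to succeed.

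\textbf{Main obstacle.} The hardest step is getting such a tree for \emph{every} target simultaneously while losing only a constant factor rather than the $n^{O(1)}$ factors in the bounds of Moews and Fu--Huang--Shiue. A union bound over $2^n$ targets will fail for a purely random $S$. So I will instead take $S$ to be a random translate of a structured set, such as a linear code or product of small optimal codes, for which the annulus counts are nearly deterministic. I would then handle the remaining deficient targets with a second sparse layer of piles whose total size is $o((4/3)^n)$. Concretely, I will first try a recursive argument: failure at level $i$ of the tree happens with probability bounded by a constant $<1$, independently across the disjoint coordinate sets used by different branches. This gives a branching-process survival probability bounded away from zero. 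Boosting it to $1-o(2^{-n}(4/3)^n)$ with a constant-factor increase of $C$ is where the careful estimate lies.
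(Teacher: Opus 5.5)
Your lower bound is correct and is the paper's weight-function argument (Lemma~\ref{lem:weight}, Proposition~\ref{prop:lower}). The upper bound, however, is not proved. The step you call the main obstacle is the entire content of the theorem, and the mechanisms you propose do not close it.

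First, your merging tree has a trunk of length $d-j$ followed by $j$ branch levels on a remaining path of length $j$, so every branch segment has length $1$. Hence all $2^j$ leaves lie on the sphere of radius $j$ about the trunk endpoint $w_0$, and $w_0$ is at distance $d-j=m$ from $t$. That is exactly where a single $2^m$-pile would already suffice. The budget forces the density $q=C(2/3)^n2^{-m}\le C(2/3)^n$, which is exponentially small. So the expected number of such trees is at most about $\binom nm q\,(n^2q)^{2^j-1}$, exponentially smaller for every $j\ge1$ than the expected number $\binom nm q=O(C/\sqrt n)$ of single piles at distance $m$. You therefore cannot choose $j$ to get ``about $C$'' trees. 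In effect only single piles within distance $m$ serve $t$, which is the single-stack covering regime that Proposition~\ref{prop:covering} shows costs $\Omega(\sqrt n(4/3)^n)$.

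Second, even granting a per-target success probability bounded away from zero, boosting it to $1-o((2/3)^n)$ by independent repetition needs $\Theta(n)$ repetitions. That is a factor $\Theta(n)$ in cost, precisely the polynomial loss you are trying to avoid. A constant-factor increase of $C$ leaves a constant fraction of targets deficient, and nothing shows that a layer of size $o((4/3)^n)$ repairs them. The random-translate-of-a-code idea is not developed far enough to assess.

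The missing idea in the paper is to separate concentration from integrality.
\begin{itemize}
\item In the high-demand regime of Lemma~\ref{lem:high-demand}, $T\ge C\delta^{-2}n2^{\Adem\sqrt{n\log(2/\delta)}}$, i.i.d.\ random centers carrying $2^{\rout}$ pebbles deliver exactly their weight from the annulus $\rin\le d\le\rout$, with no merging at all. Each center contributes at most $2^{2w}$, which is tiny compared with $T$. Chernoff then gives failure probability $2^{-2n}$ per target, and a union bound yields cost $(1+\delta)T(4/3)^n$.
\item Cooperation is then supplied deterministically by the product step, Lemma~\ref{lem:product-step}. Write $Q_n=Q_a\square Q_m$ with $m=\lceil K\sqrt{n\log n}\rceil$. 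A recursively built solvable distribution on $Q_m$, all of whose piles are at least $2^{\lfloor m/5\rfloor}$ (above the high-demand threshold in dimension $a$), prescribes the demands. The fibers receive high-demand distributions with $\delta=n^{-2}$.
\item The normalized cost satisfies $R_n\le(1+2n^{-2})R_m$, and the product of these factors over the rapidly shrinking dimensions converges to a constant.
\end{itemize}
The lossless merging you tried to find inside a sparse random set is obtained instead by running a pebbling sequence of $Q_m$ on the piles the fibers have prepared.
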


\section*{Lower bound}

Below we present the weight-function lower bound for pebbling for completeness; see
Chung~\cite{Chung} for the hypercube setting, Bunde et al.~\cite{BundeEtAl} for the
optimal-pebbling proof, and the formulation in~\cite{Hurlbert}.

\begin{proposition}[Lower bound]\label{prop:lower}
$o(Q_n)\,\ge\,\left(\frac43\right)^n.$
\end{proposition}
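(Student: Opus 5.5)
We will use a weight-function argument. Fix a solvable distribution $p$ on $Q_n$, and for a target vertex $r$ define the weight
\[
W_r(p)\,=\,\sum_{v} p(v)\,2^{-d(v,r)},
\]
where $d$ is the Hamming distance. The key observation is that a pebbling move never increases $W_r$. A move removes two pebbles from $u$ and adds one to a neighbor $w$. Since $d(w,r)\ge d(u,r)-1$, we have $2^{-d(w,r)}\le 2\cdot 2^{-d(u,r)}$, so the change in weight is at most $0$. A configuration with a pebble on $r$ has $W_r\ge 1$. Since $p$ is solvable for every target $r$, we conclude that $W_r(p)\ge 1$ for all $r\in V(Q_n)$.

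Next we average over targets. Summing the inequality over all $2^n$ choices of $r$ gives
\[
2^n\,\le\,\sum_r W_r(p)\,=\,\sum_v p(v)\sum_r 2^{-d(v,r)}.
\]
By vertex-transitivity, the inner sum is independent of $v$ and equals
\[
\sum_{k=0}^n\binom{n}{k}2^{-k}\,=\,\left(\tfrac32\right)^n.
\]
Hence $|p|\,(3/2)^n\ge 2^n$, that is, $|p|\ge (4/3)^n$. Taking $p$ optimal yields $o(Q_n)\ge(4/3)^n$.

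There is no serious obstacle. The only point requiring care is the monotonicity of the weight under a single move, which reduces to the triangle inequality $d(w,r)\ge d(u,r)-1$ for adjacent $u,w$. We also use the fact that reachability of $r$ forces a final configuration of weight at least $1$, which holds because all weights are nonnegative.
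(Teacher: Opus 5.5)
Your proof is correct and follows the paper's argument essentially step for step: the same weight $W_r$, monotonicity under a single move via $d(w,r)\ge d(u,r)-1$, summation over all $2^n$ targets, and the identity $\sum_k\binom nk 2^{-k}=(3/2)^n$. You also spell out a point the paper leaves implicit: a configuration with a pebble on $r$ has weight at least $1$ because all contributions are nonnegative.
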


The bound rests on the monotonicity of a weight function under pebbling moves.

\begin{lemma}[Weight monotonicity]\label{lem:weight}
Let $D$ be a pebbling distribution on $Q_n$.  For a target vertex $v$ define the weight
\[
W_v(D)\,=\,\sum_{u\in V(Q_n)} D(u)2^{-d(u,v)}.
\]
Then $W_v$ does not increase under a legal pebbling move, and if $D$ is solvable then
$W_v(D)\ge 1$ for every $v$.
\end{lemma}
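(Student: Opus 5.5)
The plan is to check directly how a single pebbling move changes $W_v$, and then use the non-increase to compare an arbitrary distribution with one that places a pebble on $v$.

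\textbf{Effect of one move.} Consider a move from $x$ to an adjacent vertex $y$. It changes $D(x)$ by $-2$ and $D(y)$ by $+1$, leaving all other values fixed. The change in weight is therefore
\[
\Delta W_v \,=\, 2^{-d(y,v)} - 2\cdot 2^{-d(x,v)} .
\]
Since $x$ and $y$ are adjacent, the triangle inequality gives $d(y,v)\ge d(x,v)-1$. Hence $2^{-d(y,v)}\le 2\cdot 2^{-d(x,v)}$, and so $\Delta W_v\le 0$. Equality holds exactly when $y$ lies one step closer to $v$ than $x$. By induction on the number of moves, $W_v$ is non-increasing along any sequence of legal moves.

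\textbf{Solvable implies $W_v\ge 1$.} Suppose $D$ is solvable and fix a target $v$. Then some sequence of moves produces a distribution $D'$ with $D'(v)\ge 1$. Because every term of $W_v(D')$ is nonnegative, we get
\[
W_v(D') \,\ge\, D'(v)\,2^{0} \,\ge\, 1 .
\]
Monotonicity then gives $W_v(D)\ge W_v(D')\ge 1$.

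The only point needing care is the distance inequality $|d(x,v)-d(y,v)|\le 1$ for adjacent $x,y$. This follows from the triangle inequality in the graph metric, so I do not expect any genuine obstacle here.
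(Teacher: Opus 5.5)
Your proposal is correct and follows the paper's proof: you compute the change $2^{-d(y,v)}-2\cdot 2^{-d(x,v)}\le 0$ using $d(y,v)\ge d(x,v)-1$, then apply monotonicity along a solving sequence. The only difference is that you spell out the step $W_v(D')\ge D'(v)\ge 1$ for the final distribution, which the paper leaves implicit.
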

\begin{proof}
During a pebbling move the weight cannot increase: replacing two pebbles at $x$ by one
pebble at an adjacent vertex $y$ changes the contribution from $2\cdot 2^{-d(x,v)}$ to
$2^{-d(y,v)}$, and $d(y,v)\ge d(x,v)-1$.  Since $D$ is solvable, some sequence of moves
places a pebble on $v$, whence $W_v(D)\ge 1$ for every $v$.
\end{proof}

\begin{proof}[Proof of Proposition~\ref{prop:lower}]
Let $D$ be any solvable distribution.  Summing the inequality of
Lemma~\ref{lem:weight} over all targets,
\[
2^n
\,\le\, \sum_{v\in V(Q_n)} W_v(D)
\,=\,\sum_{u\in V(Q_n)}D(u)\sum_{v\in V(Q_n)}2^{-d(u,v)}.
\]
For each fixed $u$,
\[
\sum_{v\in V(Q_n)}2^{-d(u,v)}
\,=\,\sum_{i=0}^n \binom ni2^{-i}
\,=\,\left(\frac32\right)^n,
\]
so $2^n\le |D|\left(\frac32\right)^n$ and therefore $|D|\ge \left(\frac43\right)^n$.  Since
$D$ was arbitrary, this proves the lower bound.
\end{proof}

\section*{Upper bound}

\begin{definition}[High-demand number]\label{def:demand}
For an integer $T\ge 1$, let $o_T(Q_n)$ denote the smallest size of a distribution from
which one can move at least $T$ pebbles to any prescribed target.
\end{definition}

\begin{lemma}[Near-optimal high-demand lemma]\label{lem:high-demand}
There are constants $\Adem,C,n_0>0$ such that the following holds.  If $n\ge n_0$,
$n^{-3}\le \delta\le 1/2$, and
\[
T\,\ge\, C\delta^{-2}n2^{\Adem\sqrt{n\log(2/\delta)}},
\]
then
\[
o_T(Q_n)\,\le\, (1+\delta)T\left(\frac43\right)^n.
\]
Moreover, the witnessing distribution may be chosen with every occupied pile of size at
least $2^{\lfloor n/3\rfloor}$.
\end{lemma}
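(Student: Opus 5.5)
We will build the distribution as a random (or averaged) placement of large piles in the pattern suggested by the equality case of Proposition~\ref{prop:lower}. Equality there requires $W_v(D)$ to be essentially exactly $T$ for every target, and every pebble must travel along a geodesic toward $v$ with no loss from rounding. The plan is to make rounding losses negligible by using huge piles, and to make the weights nearly uniform by averaging over a transitive family of placements.

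\textbf{Step 1: a single pile.} We first record that a pile of $N$ pebbles at $u$ with $d(u,v)=i$ delivers $\lfloor N/2^i\rfloor\ge N2^{-i}-1$ pebbles to $v$. If a target $v$ draws from many piles along disjoint geodesic routes, the total delivered is at least $W_v(D)-(\text{number of piles used})$. Because every pile has size at least $2^{\lfloor n/3\rfloor}$, these unit losses are a tiny fraction of the weight, provided $v$ uses only piles within distance at most about $n/3$. Routes must not interfere: pebbles funneled from different sources should only merge in a way that preserves the counts. Powers of two are the clean case, so we will take pile sizes to be powers of two and merge piles level by level, like a binary counter along a geodesic tree.

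\textbf{Step 2: choice of the pile set.} Weight concentrates on sources at distance $\approx n/3$. This is because $\sum_i\binom ni2^{-i}$ peaks at $i=n/3$, with a window of width $O(\sqrt n)$. We therefore place piles of equal size $N=2^{k}$ on a set $S$ of vertices such that every $v$ has about the expected number of elements of $S$ in each sphere of radius $i\in[n/3-A\sqrt{n\log(2/\delta)},\,n/3+A\sqrt{n\log(2/\delta)}]$. We take $S$ to be a random subset, or a random coset of a linear code, of density $\rho$. Concentration (Chernoff plus a union bound over $2^n$ targets and $O(n)$ radii) gives relative error at most $\delta/10$ in every shell, provided the expected count in each shell is at least $C\delta^{-2}n$. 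Dropping the tails outside the window costs at most a factor $\delta/10$ of the weight, by a binomial tail bound; this is where $\sqrt{n\log(2/\delta)}$ comes from. The expected count in the extreme shell, scaled against $T$, produces the threshold $T\ge C\delta^{-2}n2^{A_{\rm dem}\sqrt{n\log(2/\delta)}}$. Concretely, the ratio between $2^{-i}\binom ni$ at the center and at the edge of the window is $2^{O(\sqrt{n\log(2/\delta)})}$. To make even the edge shells well sampled, $T$ must be that much larger than the minimal pile weight. The total size is then $\rho 2^nN\approx(1+\delta)T(4/3)^n$, after solving $\rho N(3/2)^n\cdot(1-\delta/5)\ge T$.

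\textbf{Step 3 (main obstacle): realizing the weight as actual pebbles.} Having $W_v\ge(1+\delta/2)T$ is not sufficient by itself. Pebbles from different piles at distance $i$ must be routed to $v$ without exceeding what the fractional weight allows, and intermediate vertices must receive even counts. To handle this, we orient the transport as a flow on the subcube lattice. For each $v$, we assign to each source $u\in S$ a shortest path to $v$, chosen so that each pile sends exactly $N2^{-i}$ effective pebbles. Since $N\ge 2^{n/3}\ge 2^i$ on the window, every halving step is exact, and we move the whole pile one step at a time toward $v$.

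Exactness is the point where the restriction of $i$ to the window matters. When $i$ is slightly above $n/3$, $N$ must be at least $2^i$ for the halving to remain exact, so we take $N=2^{\lceil n/3+A\sqrt{n\log(2/\delta)}\rceil}$. This also gives the pile-size guarantee. With that choice, the moves from distinct piles commute, because each pile is moved independently and their arrivals simply add at $v$. The delivered total therefore equals the truncated weight, with no rounding loss. The remaining work is bookkeeping: we verify that the concentration estimate and the choice of $N$ and $\rho$ are compatible, namely that $\rho\le1$ and that the expected shell counts are $\ge C\delta^{-2}n$. These two constraints together force the stated lower bound on $T$, with $n^{-3}\le\delta$ keeping $\log(2/\delta)=O(\log n)$ so that the union bound closes.
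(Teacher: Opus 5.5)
Your construction is essentially the paper's: piles of the common size $2^{\rout}$ (with $\rout\approx n/3+w$ and $w\approx A\sqrt{n\log(2/\delta)}$) sit at random centers. Only centers in the annulus $\rin\le d\le\rout$ are used, and each such pile delivers exactly $2^{\rout-d}$ pebbles independently of the others. The tails are discarded by the $\operatorname{Bin}(n,1/3)$ tail bound, and a Chernoff bound plus a union bound over the $2^n$ targets finishes.

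Your shell-by-shell concentration is a valid substitute for the paper's single Chernoff bound on the weighted sum $X_t$ with increments at most $2^{2w}$. The binding shell is $i=\rin$. Its expected count is about $T\,2^{\rin-\rout}\,\mathbb P(\operatorname{Bin}(n,1/3)=\rin)\ge T\,2^{-2w}n^{-1/2}(\delta/2)^{O(A^2)}$. The extra factor $\sqrt n\,(2/\delta)^{O(A^2)}$ is absorbed into $2^{\Adem\sqrt{n\log(2/\delta)}}$, because $\log(2/\delta)\le\sqrt{n\log(2/\delta)}$ once $\delta\ge n^{-3}$. Note that the dominant loss $2^{-2w}$ comes from the pile normalization $2^{\rin-\rout}$. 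It does not come from the variation of $2^{-i}\binom ni$ across the window, which is only $(2/\delta)^{O(A^2)}$.

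The step that fails as written is the density constraint. In your Bernoulli-subset model $\rho\approx T(2/3)^n2^{-\rout}$, so $\rho\le 1$ is an \emph{upper} bound $T\lesssim 2^{\rout}(3/2)^n$. It does not help ``force the stated lower bound on $T$''. Since the lemma allows arbitrarily large $T$, for such $T$ your distribution does not exist.

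The paper sidesteps this by drawing a fixed number $N=\lceil(1+\delta/2)T(4/3)^n/2^{\rout}\rceil$ of centers independently and uniformly \emph{with replacement}, letting coincident centers stack. This also makes the total size deterministic; in your model $|S|$ is random and its upper tail must be controlled too. Alternatively, you could let the pile size $2^k$ with $k\ge\rout$ grow with $T$, or split $T$ using $o_{T_1+T_2}(Q_n)\le o_{T_1}(Q_n)+o_{T_2}(Q_n)$.

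Finally, drop the ``random coset of a linear code'' option. Its points are not independent, so the Chernoff bound you invoke does not apply to it.
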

\begin{proof}
Choose a constant $\Aann$ large enough for the binomial tail estimate
below, and set
\[
w\,=\,\left\lceil \Aann\sqrt{n\log(2/\delta)}\right\rceil,\qquad
\rout\,=\,\left\lfloor\frac n3\right\rfloor+w,\qquad
\rin\,=\,\left\lfloor\frac n3\right\rfloor-w,\qquad
S\,=\,2^{\rout}.
\]
The constants $\Aann$ and $\Adem$ control, respectively, the annulus width and the demand
threshold. Later, we choose $\Adem>2\Aann$.  Thus, $\rin$ and $\rout$ are the inner and
outer radii of the annulus around a target.
\label{def:annulus}
Increasing $n_0$ if necessary, the assumption $\delta\ge n^{-3}$ ensures that
$0\le \rin\le \rout\le n$.
We use standard Chernoff bounds~\cite{AlonSpencer}.  By Chernoff's inequality, for
$Y\sim\operatorname{Bin}(n,1/3)$,
\[
\mathbb P(\rin\,\le\, Y\,\le\, \rout)\,\ge\, 1-\delta/8.
\]
Choose
\[
N\,=\,\left\lceil (1+\delta/2)T\frac{(4/3)^n}{S}\right\rceil
\]
centers independently and uniformly from $Q_n$, with replacement, and place $S$ pebbles at
each chosen center.  Repeated centers simply add their piles.
The ceiling changes the cost by at most $S$.  The threshold for $T$ and the assumption
$\delta\ge n^{-3}$ imply, for all sufficiently large $n$,
\[
S\,\le\, \frac{\delta}{2}T\left(\frac43\right)^n,
\]
and therefore
\[
NS\,\le\, (1+\delta)T\left(\frac43\right)^n.
\]

Now fix a target $t$, and count only centers in the annulus $\rin\le d(c,t)\le \rout$:
\[
X_t\,=\,\sum_{j=1}^N 2^{\rout-d(c_j,t)}
{\bf 1}_{\rin\,\le\, d(c_j,t)\,\le\, \rout}.
\]
If a center is at distance $d$ from $t$, its stack of $S=2^{\rout}$ pebbles can send
$2^{\rout-d}$ pebbles to $t$.  Thus the centers counted in $X_t$ can deliver $X_t$
pebbles directly to $t$.  For a uniformly random center, the distance from $t$ has
distribution $\operatorname{Bin}(n,1/2)$. Also
\[
\left(\frac43\right)^n2^{-n}2^{-d}
\,=\,\left(\frac13\right)^d\left(\frac23\right)^{n-d},
\]
so this change of weights converts $\operatorname{Bin}(n,1/2)$ into
$\operatorname{Bin}(n,1/3)$. Therefore,
\[
\mathbb E X_t
\,\ge\, (1+\delta/2)T
\mathbb P\!\left(\rin\,\le\, \operatorname{Bin}(n,1/3)\,\le\, \rout\right)
\,\ge\, (1+\delta/4)T.
\]
Also,
\[
\mathbb E X_t\,\le\, NS\left(\frac34\right)^n\,\le\, (1+\delta)T\,\le\, 2T.
\]
One center contributes at most
\[
B_0\,=\,2^{\rout-\rin}\,=\,2^{2w}
\]
to $X_t$, and the summands defining $X_t$ are independent.  Set
$\mathrm{gap}=\mathbb E X_t-T$; by $\mathbb E X_t\ge (1+\delta/4)T$ we have
$\mathrm{gap}\ge (\delta/4)T$, and $\mathbb E X_t\le 2T$ from the bound on $NS$
above.  A Chernoff bound for the lower tail with bounded increments, optimized
over the exponential parameter, yields
\[
\mathbb P(X_t\,<\,T)\,\le\,
\exp\!\left(-\frac{\mathrm{gap}^2}{4B_0\mathbb E X_t}\right)
\,\le\,\exp\!\left(-c\frac{\delta^2T}{B_0}\right),
\]
valid for $\delta\le 1/2$ with $c=1/128$.  The condition $\Adem>2\Aann$ makes the
demand threshold grow faster than the worst single-center contribution coming
from the annulus width. Indeed, $B_0\le 4\cdot 2^{2\Aann\sqrt{n\log(2/\delta)}}$,
while the hypothesis on $T$ gives
\[
\frac{\delta^2T}{B_0}
  \,\ge\, c' C n\,
  2^{(\Adem-2\Aann)\sqrt{n\log(2/\delta)}}
\]
for an absolute constant $c'>0$.  Taking $C$ sufficiently large makes this
probability at most $2^{-2n}$.  A union bound over all $2^n$ targets then shows that
with positive probability the random distribution is $T$-solvable for every target, so
by the probabilistic method a distribution witnessing the lemma exists.
\end{proof}

The recursion is built on a deterministic composition step, a special case of the product
theorem for solvability with arbitrary target distributions in~\cite{HHH}.  The approach is
illustrated in Figure~\ref{fig:product-step}.  Throughout, we write
$Q_n=Q_a\square Q_m$, where $\square$ denotes the Cartesian product.
\label{def:product}

\begin{lemma}[Product / composition step]\label{lem:product-step}
Suppose $E$ is a solvable distribution on $Q_m$, and for each occupied vertex $z$ of $Q_m$
with $E(z)=s_z$ one has $o_{s_z}(Q_a)\le (1+\delta)s_z\left(\frac43\right)^a$.  Then
\[
o(Q_n)\,\le\, (1+\delta)|E|\left(\frac43\right)^a.
\]
\end{lemma}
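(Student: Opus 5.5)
We build the distribution on $Q_n=Q_a\square Q_m$ fiber by fiber. For each vertex $z$ of $Q_m$, write $Q_a\times\{z\}$ for the copy of $Q_a$ over $z$. For each occupied $z$ with $E(z)=s_z$, the hypothesis gives a distribution $D_z$ on $Q_a$ with $|D_z|\le(1+\delta)s_z(4/3)^a$ from which $s_z$ pebbles can be moved to any prescribed vertex of $Q_a$. We then define
\[
D(x,z)=D_z(x)
\]
for occupied $z$, and $D(x,z)=0$ otherwise. Summing over occupied $z$ gives
\[
|D|\le(1+\delta)\left(\tfrac43\right)^a\sum_z s_z=(1+\delta)|E|\left(\tfrac43\right)^a,
\]
so it remains to show that $D$ is solvable.

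Fix a target $(x_0,z_0)$. In each occupied fiber $Q_a\times\{z\}$ we use only moves along $Q_a$-edges inside that fiber. By the defining property of $D_z$ (Definition~\ref{def:demand}), these moves bring at least $s_z=E(z)$ pebbles onto the vertex $(x_0,z)$. The fibers are vertex-disjoint, so these move sequences do not interfere with one another. After carrying them out, the layer $\{x_0\}\times Q_m$, which is a copy of $Q_m$, carries a distribution that dominates $E$ pointwise.

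Since $E$ is solvable on $Q_m$, some sequence of pebbling moves on $Q_m$ places a pebble on $z_0$. Extra pebbles never invalidate a legal move sequence, so the same sequence is legal from any distribution that dominates $E$. We perform this sequence along $Q_m$-edges within the layer $\{x_0\}\times Q_m$, and it puts a pebble on $(x_0,z_0)$. As the target was arbitrary, $D$ is solvable and $o(Q_n)\le|D|$.

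The only point needing care is the monotonicity used in the last step: pebbling solvability is preserved under adding pebbles. We justify this by running the same move list, checking by induction on the number of moves that the current distribution stays pointwise at least the one reached from $E$. The rest is bookkeeping. If $o_{s_z}(Q_a)$ is attained with equality or is at most the bound, we simply take a minimizing distribution, so no rounding issues arise. Unoccupied $z$ contribute neither cost nor demand.
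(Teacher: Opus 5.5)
Your proposal is correct and follows the paper's proof: you place an $s_z$-demand distribution in each fiber $Q_a\times\{z\}$, gather $s_z$ pebbles at $(x_0,z)$, and then replay the $E$-solving sequence in the slice $\{x_0\}\times Q_m$. Your induction showing that extra pebbles never invalidate a move sequence makes explicit a step the paper leaves implicit when it says the sequence ``can therefore be executed edge-for-edge.''
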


\begin{figure}[htbp]
\centering
\begin{tikzpicture}[
  x=1.05cm,
  y=0.72cm,
  stacklabel/.style={text=CaltechGray},
  pebble/.style={circle,draw=black,fill=black,minimum size=2.7mm,inner sep=0pt},
  movearrow/.style={-{Latex[length=2.4mm]},draw=CaltechOrange,very thick}
]
  \def\xstar{3}

  \fill[CaltechOrangeLight] (2.65,-0.35) rectangle (3.35,3.35);
  \node[stacklabel,above=4pt] at (\xstar,3.35) {slice $\{x\}\times Q_m$};

  \foreach \j/\lab in {0/z_1,1/z_2,2/\textcolor{blue}{y},3/z_3} {
    \draw[CaltechGray!45,thin] (-0.15,\j) -- (5.15,\j);
    \node[stacklabel,left=7pt] at (-0.15,\j) {$\lab$};
  }
  \foreach \i in {0,1,2,3,4,5} {
    \draw[CaltechGray!18,thin] (\i,-0.2) -- (\i,3.2);
  }
  \foreach \i in {0,1,2,3,4,5} {
    \foreach \j in {0,1,2,3} {
      \node[circle,draw=CaltechGray,fill=white,minimum size=2.4mm,inner sep=0pt] at (\i,\j) {};
    }
  }

  \node[stacklabel,below=7pt] at (2.5,-0.35) {$Q_a$ coordinate};
  \node[stacklabel,rotate=90] at (-1.25,1.5) {$Q_m$ coordinate};

  \node[pebble] at (0,0) {};
  \node[pebble] at (1,1) {};
  \node[pebble] at (5,3) {};

  \draw[movearrow] (0.25,0) -- (2.62,0);
  \draw[movearrow] (1.25,1) -- (2.62,1);
  \draw[movearrow] (4.75,3) -- (3.38,3);

  \node[pebble] at (\xstar,0) {};
  \node[pebble] at (\xstar,1) {};
  \node[pebble] at (\xstar,3) {};

  \node[circle,draw=blue,fill=blue!10,minimum size=5mm,inner sep=0pt,very thick] at (\xstar,2) {};

  \draw[-{Latex[length=2.4mm]},draw=blue,very thick] (\xstar,0.16) -- (\xstar,1.75);
  \draw[-{Latex[length=2.4mm]},draw=blue,very thick] (\xstar,1.18) -- (\xstar,1.75);
  \draw[-{Latex[length=2.4mm]},draw=blue,very thick] (\xstar,2.84) -- (\xstar,2.25);
\end{tikzpicture}
\caption{Schematic of the product step in $Q_n=Q_a\square Q_m$.  The horizontal
direction records the $Q_a$ coordinate and the vertical direction records the $Q_m$
coordinate.  The rows labeled $z_1,z_2,y,z_3$ represent fibers
$Q_a\times\{z\}$, while the highlighted column is the slice
$\{x\}\times Q_m$.  Black dots represent prepared piles.  Orange arrows indicate
moving piles within the fibers $Q_a\times\{z\}$ to first coordinate $x$; once
these piles are prepared in the highlighted slice, the blue arrows indicate the
pebbling sequence inside that copy of $Q_m$ toward the blue target $(x,y)$.}
\label{fig:product-step}
\end{figure}

\begin{proof}[Proof of Lemma~\ref{lem:product-step}]
For every occupied $z$ put in the fiber $Q_a\times\{z\}$ a copy of an $s_z$-demand
distribution on $Q_a$.  To solve a target $(x,y)$, first use the copy in
$Q_a\times\{z\}$ to move $s_z$ pebbles to $(x,z)$ for every occupied $z$.  At that point,
inside the slice $\{x\}\times Q_m$, the numbers of pebbles at the vertices $(x,z)$ are
at least the pile sizes of $E$ at the corresponding vertices $z$. A pebbling sequence in
$Q_m$ that solves $y$ from $E$ can therefore be executed edge-for-edge in the slice
$\{x\}\times Q_m$, placing a pebble at $(x,y)$.
\end{proof}

We now combine the high-demand lemma and the composition step into a recursion that
proves the upper bound.

\begin{theorem}[Upper bound]\label{thm:upper}
$o(Q_n)\,=\,O\!\left(\left(\frac43\right)^n\right).$
\end{theorem}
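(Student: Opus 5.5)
We prove Theorem~\ref{thm:upper} by iterating Lemma~\ref{lem:product-step}, feeding it at each stage with Lemma~\ref{lem:high-demand}. The idea is to write $n=a_1+a_2+\cdots+a_k+m$ with $m$ bounded and the $a_i$ decreasing. Each layer multiplies the cost by $(1+\delta_i)(4/3)^{a_i}$, and we need $\prod_i(1+\delta_i)=O(1)$. Concretely, define $F(n)=o(Q_n)(3/4)^n$. Applying Lemma~\ref{lem:product-step} with $Q_n=Q_a\square Q_m$ and $E$ an optimal solvable distribution on $Q_m$ gives
\[
o(Q_n)\le(1+\delta)\,o(Q_m)\left(\frac43\right)^a,
\qquad\text{that is,}\qquad
F(n)\le(1+\delta)F(m).
\]
This holds provided every pile $s_z$ of $E$ satisfies the demand threshold of Lemma~\ref{lem:high-demand} in dimension $a$, i.e.\ $s_z\ge C\delta^{-2}a\,2^{\Adem\sqrt{a\log(2/\delta)}}$.

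The pile-size condition is the key issue, since an optimal $E$ may well have small piles. The remedy is the ``moreover'' clause of Lemma~\ref{lem:high-demand}. We do not take $E$ optimal. Instead, we take $E$ to be the distribution produced by the previous recursion stage, which (after the base case) is itself a product distribution whose piles all have size at least $2^{\lfloor a'/3\rfloor}$, where $a'$ is the dimension used at that stage. So we run the recursion upward. We start from a base dimension $m_0$ (a large constant) and put one big pile on it, or use any solvable distribution scaled up; for instance, $2^{m_0}$ pebbles on a single vertex is solvable with a pile of size $2^{m_0}$. We then successively add factors $Q_{a_1},Q_{a_2},\dots$.

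At stage $i$ the current distribution on $Q_{m_{i-1}}$ has all piles of size at least $P_{i-1}$. We choose $a_i$ and $\delta_i$ with
\[
P_{i-1}\ge C\delta_i^{-2}a_i\,2^{\Adem\sqrt{a_i\log(2/\delta_i)}} .
\]
The new distribution on $Q_{m_{i-1}+a_i}$ lives on fibers, and its piles are the piles of the $Q_{a_i}$ demand distributions, so they have size at least $P_i=2^{\lfloor a_i/3\rfloor}$.

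Each step then needs
\[
a_{i-1}/3\gtrsim \Adem\sqrt{a_i\log(2/\delta_i)}+2\log(1/\delta_i)+\log a_i ,
\]
so $a_i$ may grow roughly quadratically, $a_i\approx c\,a_{i-1}^2/\log(1/\delta_i)$. With $\delta_i=2^{-i}$, or $\delta_i=1/i^2$, we get $\prod(1+\delta_i)<\infty$, and the dimensions grow doubly exponentially. This is more than enough to reach any $n$ in $O(\log\log n)$ stages. We also need $\delta_i\ge a_i^{-3}$, which holds easily.

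To hit exactly $n$, we take the last factor $a_k=n-m_{k-1}$, which is allowed to be smaller than the maximal permitted size. Any $a$ between $n_0$ and the maximum works, since the threshold is monotone in $a$. If the leftover is below $n_0$, we absorb it by noting $o(Q_{n})\le 2\,o(Q_{n-1})\cdot$ const, or we simply shrink the previous step.

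The final bound is
\[
o(Q_n)\le 2^{m_0}\left(\frac43\right)^{n-m_0}\prod_i(1+\delta_i)=O\!\left(\left(\frac43\right)^n\right).
\]

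The main obstacle is the bookkeeping for the pile-size invariant. We must check that the composed distribution's piles really are just the $Q_a$-piles, with no merging issues; merging only increases piles. We must also check that the doubly-exponential schedule with summable $\delta_i$ satisfies the threshold inequality at every stage, including the first step from the constant base $m_0$, which forces $m_0$ and $n_0$ to be chosen large relative to $C,\Adem$.
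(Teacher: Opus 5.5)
Your proof is correct and is essentially the paper's argument run bottom-up. It uses the same composition step with the smaller cube supplying the demands, the same pile-size invariant from the ``moreover'' clause, and the same scaling: your $a_i\approx c\,a_{i-1}^2/\log(1/\delta_i)$ is the inverse of the paper's $m=\lceil K\sqrt{n\log n}\rceil$, with summable losses ($\delta=n^{-2}$ there, $\delta_i=2^{-i}$ for you). The paper's top-down recursion produces $D_n$ for every $n$ directly and so avoids your final-step adjustment. In that adjustment, note that $\delta_k\ge a_k^{-3}$ can fail for a short last factor; using $\delta=1/2$ for that single step fixes this at constant cost $3/2$, since the threshold only decreases as $\delta$ grows.
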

\begin{proof}
Let $\Adem,C$ be the constants from Lemma~\ref{lem:high-demand}. In the recursion, the
smaller factor $Q_m$ supplies the demand values $s_z$, while the larger factor $Q_a$
supplies those demands inside the fibers.  The inductive pile-size lower bound will give
$s_z\ge 2^{\lfloor m/5\rfloor}$, so $m$ must be large enough for
$2^{\lfloor m/5\rfloor}$ to exceed the high-demand threshold in dimension $a$.
Choose $K>5\Adem\sqrt 2$.  Then $m/5$ dominates
$\Adem\sqrt{n\log(2n^2)}$ for large $n$; the floor only changes constants.
Choose $N_0$ so large
that, for every $n\ge N_0$, with
\[
m\,=\,\left\lceil K\sqrt{n\log n}\right\rceil,\qquad a\,=\,n-m,
\]
we have $m\le n/4$,
\[
2^{\lfloor m/5\rfloor}\,\ge\, C n^5 2^{\Adem\sqrt{n\log(2n^2)}},
\]
and $a\ge n_0$ and $a^{-3}\le n^{-2}\le 1/2$, so the high-demand lemma applies in
dimension $a$ with $\delta=n^{-2}$.
\label{def:parameters}

We define solvable distributions $D_n$ recursively.  For every $n<N_0$, choose any
solvable distribution $D_n$ on $Q_n$ whose occupied piles have size at least
$2^{\lfloor n/5\rfloor}$; for example, place $2^{\lfloor n/5\rfloor}$ pebbles on every
vertex.  For $n\ge N_0$, assume
that $D_m$ has already been constructed for
$m=\left\lceil K\sqrt{n\log n}\right\rceil$.
For each occupied vertex $z$ of $D_m$, let $s_z=D_m(z)$.  By induction every occupied
pile of $D_m$ satisfies $s_z\ge 2^{\lfloor m/5\rfloor}$, and by the choice of
$K,N_0$ this is at least $C\delta^{-2}a2^{\Adem\sqrt{a\log(2/\delta)}}$.
Thus Lemma~\ref{lem:high-demand} applies on the $Q_a$ factor for every required demand
$s_z$.  Define $D_n$ on $Q_n=Q_a\square Q_m$ by putting, in the fiber $Q_a\times\{z\}$, an
$s_z$-demand distribution on $Q_a$ of size at most $(1+n^{-2})s_z\left(\frac43\right)^a$.
By Lemma~\ref{lem:product-step} the distribution $D_n$ is solvable, and
\[
|D_n|\,\le\, (1+n^{-2})|D_m|\left(\frac43\right)^a.
\]
The pile-size invariant is preserved: the high-demand lemma uses piles of size at least
$2^{\lfloor a/3\rfloor}$, and since $a\ge 3n/4$ we have
$\lfloor a/3\rfloor\ge n/4-1\ge n/5\ge \lfloor n/5\rfloor$ whenever $n\ge 20$.
\label{lem:recurrence}

Put $R_n=|D_n|/(4/3)^n$.  The construction yields $R_n\le (1+n^{-2})R_m$, and for the
fully integral recursive bound we use the harmless weaker recurrence
$R_n\le (1+2n^{-2})R_m$, which absorbs the ceiling in the integer fiber multiplier and
changes only the final absolute constant.  Iterate this recurrence while $n_i\ge N_0$,
where $n_{i+1}=\left\lceil K\sqrt{n_i\log n_i}\right\rceil$.
By increasing $N_0$ we may assume $n_{i+1}\le n_i/4$ throughout the iteration, so that
$n_{i+1}^{-2}\ge 16\,n_i^{-2}$ and the sequence $(n_i^{-2})$ grows at least geometrically.
Summing this geometric series, $\sum_i n_i^{-2}\le n_{i_*}^{-2}\cdot\frac{16}{15}\le
\frac{16}{15N_0^2}$, where $n_{i_*}\ge N_0$ is the last active term in the iteration.
At the terminal dimension $b<N_0$, the base construction gives
$R_b\le 2^{\lfloor b/5\rfloor}\left(\frac32\right)^b$, so all terminal costs are bounded by
$R_{\mathrm{base}}=\sum_{j=0}^{N_0-1}2^{\lfloor j/5\rfloor}\left(\frac32\right)^j$.  Hence
\[
R_n\,\le\, R_{\mathrm{base}}\prod_i(1+2n_i^{-2})
\,\le\, R_{\mathrm{base}}\exp\!\left(2\sum_i n_i^{-2}\right)
\,\le\, R_{\mathrm{base}}\exp\!\left(\frac{4}{N_0^2}\right).
\]
\label{lem:loss}
This is an absolute constant, and hence $o(Q_n)\le |D_n|=O\!\left(\left(\frac43\right)^n\right)$.
\end{proof}

Theorem~\ref{thm:main} follows by combining Proposition~\ref{prop:lower} and
Theorem~\ref{thm:upper}.

\section*{Cooperation is necessary}

The upper bound is essentially cooperative: a matching upper bound cannot be obtained
from isolated stacks.

\begin{proposition}[Single-stack covering lower bound]\label{prop:covering}
If every target must be reached from a single stack of pebbles, then the stack centers
form a covering code and the total cost is at least
\[
\min_r 2^r\frac{2^n}{B(n,r)}
\,=\,\Omega\!\left(\sqrt n\left(\frac43\right)^n\right),
\qquad
B(n,r)\,=\,\sum_{i=0}^r\binom ni.
\]
\end{proposition}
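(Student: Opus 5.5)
The plan has two parts: first reduce to a covering-code bound, then do the asymptotic optimization over the radius. Formally, a single-stack distribution places piles $p_c$ at centers $c$, and each target $t$ must be reachable using only the pebbles of one stack. Reaching a target at distance $d$ from a single pile requires at least $2^{d}$ pebbles, by Lemma~\ref{lem:weight} applied to the distribution consisting of that pile alone. So a pile of size $p$ covers exactly the ball of radius $r=\lfloor\log_2 p\rfloor$ around its center, and we may round each pile down to $2^{r_c}$ without losing coverage.

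\emph{Step 1: reduce to one radius.} Targets are then covered by the balls $B(c,r_c)$, which gives $\sum_c B(n,r_c)\ge 2^n$. The cost is $\sum_c 2^{r_c}=\sum_c B(n,r_c)\cdot 2^{r_c}/B(n,r_c)$. Hence
\[
\sum_c 2^{r_c}\,\ge\, 2^n\min_r \frac{2^r}{B(n,r)}.
\]
This is the displayed minimum, so the single-radius covering bound holds even when the radii are mixed.

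\emph{Step 2: estimate $\min_r 2^r\,2^n/B(n,r)$.} Write
\[
f(r)=\frac{B(n,r)}{2^r}=\sum_{i\le r}\binom ni 2^{-(r-i)}.
\]
The ratio $\binom{n}{i-1}/\binom ni=i/(n-i+1)$ is at most $1/2$ for $r\le n/3$. In that range the terms decay geometrically going down from $i=r$, so $f(r)\le 2\binom nr 2^{-r}$. For $r\ge n/3$ I would instead bound $f(r)\le\sum_{i=0}^n\binom ni 2^{-(r-i)}\cdot 2^{-(i-r)_+}$. More simply, I would write
\[
f(r)=2^{-r}\sum_{i\le r}\binom ni\le \sum_i\binom ni 2^{-i}\,2^{i-r}\mathbf 1_{i\le r},
\]
and use the maximum of $\binom ni2^{-i}$, which occurs near $i=n/3$.

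The key fact is that $\max_i\binom ni 2^{-i}=O\!\big((3/2)^n/\sqrt n\big)$. This follows because $\binom ni2^{-i}/(3/2)^n$ is the $\operatorname{Bin}(n,1/3)$ probability mass, which is $O(1/\sqrt n)$ by Stirling's formula, or by the local limit bound on binomial point probabilities.

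For all $r$, I then bound $f(r)\le\sum_{k\ge0}2^{-k}\binom n{r-k}2^{-(r-k)}\cdot$. Concretely, $f(r)=\sum_{k\ge0}\binom n{r-k}2^{-(r-k)}2^{-k}\le 2\max_i\binom ni2^{-i}$, since $\binom n{r-k}2^{-r}=\binom n{r-k}2^{-(r-k)}2^{-k}$. This yields $f(r)=O((3/2)^n/\sqrt n)$ uniformly in $r$, and therefore
\[
\min_r\frac{2^n}{f(r)}=\Omega\!\left(\sqrt n\,(4/3)^n\right).
\]

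The main obstacle, such as it is, lies in Step 1: I must argue carefully that single-stack reachability is exactly ball coverage with radius $\lfloor\log_2 p\rfloor$, since a pile of size $p$ reaches every vertex within distance $\lfloor\log_2 p\rfloor$ by moving straight along a geodesic. Combining the averaging in Step 1 with the uniform bound on $f$ from Step 2 completes the argument, and it shows the lower bound exceeds the cooperative bound of Theorem~\ref{thm:upper} by a $\sqrt n$ factor.
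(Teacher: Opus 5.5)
Your proof is correct. Step~1 matches the paper's reduction. A pile of size $p$ covers exactly the ball of radius $\lfloor\log_2 p\rfloor$: necessity comes from Lemma~\ref{lem:weight} and sufficiency from moving along a geodesic. Since a radius-$r$ stack covers at most $B(n,r)$ targets at cost $2^r$, mixing radii cannot beat $2^n\min_r 2^r/B(n,r)$.

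You genuinely differ in the asymptotic step. The paper asserts that the minimum is attained at $r\sim n/3$ and evaluates there with Stirling's formula. Read literally, evaluating at a single $r$ only bounds the minimum from above. So the lower bound rests on locating the minimizer, which is true (by unimodality of $2^r/B(n,r)$) but not argued. Your uniform estimate
\[
\frac{B(n,r)}{2^r}\,=\,\sum_{k\ge 0}2^{-k}\binom{n}{r-k}2^{-(r-k)}\,\le\, 2\max_i\binom ni 2^{-i}\,=\,2\left(\frac32\right)^n\max_i\mathbb P\bigl(\operatorname{Bin}(n,1/3)=i\bigr)\,=\,O\!\left(\frac{(3/2)^n}{\sqrt n}\right)
\]
needs no information about where the minimizer lies. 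It also reuses the change of measure to $\operatorname{Bin}(n,1/3)$ that drives Lemma~\ref{lem:high-demand}. It is therefore the more complete proof of the $\Omega$ statement actually claimed.

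What the paper's route buys in exchange is the matching upper bound. Plugging in $r=\lfloor n/3\rfloor$ shows the minimum is $\Theta(\sqrt n(4/3)^n)$. This is what backs the subsequent remark that the gap to Theorem~\ref{thm:main} is of order $\sqrt n$; your argument gives only the lower half, which is all the proposition requires.

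In the write-up, delete the unused material, since your final uniform bound makes it unnecessary:
\begin{itemize}
\item the case analysis for $r\le n/3$ via the ratio $i/(n-i+1)$;
\item the abandoned $2^{-(i-r)_+}$ bound for $r\ge n/3$;
\item the incomplete display ending in a stray multiplication dot.
\end{itemize}
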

\begin{proof}
If every target is reached from a single stack of $2^r$ pebbles, then the stack centers
form a radius-$r$ covering code, and the cost is at least $2^r 2^n/B(n,r)$.  Allowing
stacks of different sizes does not improve this, since a stack of radius $r$ covers at most
$B(n,r)$ targets at cost $2^r$; thus any single-stack covering strategy has cost at least
$\min_r 2^r 2^n/B(n,r)$.  The minimum occurs at $r\sim n/3$, where Stirling's formula gives
$\Omega\!\left(\sqrt n(4/3)^n\right)$.
\end{proof}

This covering-code formulation appears in Moews~\cite{Moews}; see~\cite{CoveringCodes}
for background on covering codes.  The bound exceeds Theorem~\ref{thm:main} by a factor of
order $\sqrt n$, so any upper bound matching it to within a constant factor must use
cooperation between stacks.

The construction avoids this loss by allowing different stacks to combine.  In the
product step, many fibers first prepare prescribed numbers of pebbles at the correct
$Q_a$-coordinate, and then those prepared piles are merged according to a pebbling
sequence in the smaller cube $Q_m$. Thus, a target is supplied not by a single stack, but
by a coordinated collection of stacks whose contributions coalesce.  In this precise
sense, cooperation is necessary to achieve the optimal asymptotic order.
This raises the question: for which graphs is cooperation necessary in this sense?

\section*{Formal verification}

The proof has been formalized in Lean 4, using Mathlib.  The formalization includes the
definitions of graph pebbling, pebbling moves, reachability, solvability, and the
relational predicate for an optimal pebbling number; the weight-function lower bound; the
probabilistic high-demand construction; the product-recursion step; and the final
asymptotic upper bound.

The Lean project consists of 20 Lean source files and 12,071 lines of Lean code.  The
project was built with Lean \texttt{v4.30.0-rc2} via Lake.  The project files contain no
\texttt{sorry}, \texttt{admit}, or custom \texttt{axiom} declarations.  At the
proof-producing level, the formalization proves the following certified version of the
main asymptotic result.

\begin{theorem}[Certified optimal pebbling bounds]\label{thm:certified}
Let $k$ be any integer satisfying the formal predicate that $k$ is the optimal
pebbling number of $Q_n$. Then
\[
\left(\frac43\right)^n \,\le\, k
\qquad\text{and}\qquad
k \,\le\, C_{\mathrm{Lean}}\left(\frac43\right)^n.
\]
A paper-facing wrapper also defines a noncomputable optimal pebbling number and derives
the same two-sided bounds directly for it.
\end{theorem}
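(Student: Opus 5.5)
The certified statement is Theorem~\ref{thm:main} restated for any $k$ satisfying the formal optimality predicate. I would prove it by reducing both inequalities to facts about solvable distributions. The predicate says two things: some solvable distribution of size $k$ exists, and every solvable distribution has size at least $k$. The lower bound $(4/3)^n\le k$ then follows from Proposition~\ref{prop:lower} applied to the witnessing distribution of size $k$. The upper bound $k\le C_{\mathrm{Lean}}(4/3)^n$ follows from minimality applied to the recursively built distribution $D_n$ of Theorem~\ref{thm:upper}, whose size is at most $R_{\mathrm{base}}\exp(4/N_0^2)(4/3)^n$. So I would set $C_{\mathrm{Lean}}$ to be this absolute constant, or any explicit majorant of it.

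For the lower bound, I would formalize Lemma~\ref{lem:weight} by induction on the length of the move sequence. Each single move does not increase $W_v$, by the triangle inequality $d(y,v)\ge d(x,v)-1$. A final configuration with a pebble on $v$ has $W_v\ge1$. The double-counting identity $\sum_v 2^{-d(u,v)}=(3/2)^n$ I would prove by induction on $n$, splitting $Q_{n+1}$ into two copies of $Q_n$, rather than through binomial sums. Everything here is exact rational arithmetic.

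For the upper bound, the work is in making the recursion concrete. I would prove Lemma~\ref{lem:product-step} by lifting a move sequence in $Q_m$ to the slice $\{x\}\times Q_m$, concatenated with the fiber sequences. Here I would need a monotonicity lemma: a sequence executable from $E$ is executable from any pointwise larger distribution. I would then define $D_n$ by well-founded recursion on $n$, using $m<n$ for $n\ge N_0$. The invariants are solvability, pile size at least $2^{\lfloor n/5\rfloor}$, and $|D_n|\le \Phi(n)(4/3)^n$ for an explicit bounding function $\Phi$. I would prove these invariants simultaneously by strong induction. To avoid real-valued products over an iteration sequence, I would take $\Phi(n)=R_{\mathrm{base}}\exp(4/N_0^2-4/n^{\,})$, or a similar closed-form telescoping majorant, so that the step $\Phi(n)\ge(1+2n^{-2})\Phi(m)$ becomes a single elementary inequality checkable for $m\le n/4$.

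The main obstacle is Lemma~\ref{lem:high-demand}, the probabilistic existence argument. Rather than formalize measure-theoretic probability, I would recast it as counting over the finite set of $N$-tuples of centers. This means showing that the number of tuples failing for a fixed target is less than $2^{-n}$ times $2^{nN}$, via an exponential-moment (Chernoff) bound. Concretely, this is the inequality $\mathbb E e^{-\lambda X_t}\le\prod_j\mathbb E e^{-\lambda Z_j}$, which holds with equality by independence and becomes a product formula for finite sums. I would combine this with $e^{-x}\le 1-x+x^2/2$ for the bounded increments $Z_j\le B_0$, and with a binomial tail bound for $\operatorname{Bin}(n,1/3)$, itself proved by the same exponential-moment method. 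Finally, I would check all the asymptotic parameter conditions (the choices of $w,K,N_0,C$ and the bound $S\le \frac{\delta}{2}T(4/3)^n$) with explicit, possibly enormous, thresholds so that they are decidable inequalities rather than limits.
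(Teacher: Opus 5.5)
Your plan is correct and is essentially the paper's argument: the lower bound is the weight-function bound applied to a size-$k$ witness, and the upper bound is minimality against the recursively built $D_n$ (high-demand lemma via an exponential-moment bound, plus the product step), which yields a constant of the form $R_{\mathrm{base}}\exp(4/N_0^2)$; your counting reformulation and closed-form majorant $\Phi$ just repackage the paper's probabilistic step and iterated product. The one point left open is that the statement fixes $C_{\mathrm{Lean}}$ through the specific cutoff $N_0=(128\cdot 217^2)^2$, so you cannot substitute ``any explicit majorant'' of your own: your chosen constants and threshold conditions (for $\Aann,\Adem,C,K$, the requirement $m\le n/4$, and $2^{\lfloor m/5\rfloor}\ge Cn^5 2^{\Adem\sqrt{n\log(2n^2)}}$) must be shown to hold for all $n$ at or above that particular $N_0$.
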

The explicit certified constant in the formal upper bound is
\[
C_{\mathrm{Lean}}
\,=\,
\exp(4/N_0^2)
\sum_{j=0}^{N_0-1}2^{\lfloor j/5\rfloor}
\left(\frac32\right)^j,
\qquad
N_0\,=\,36\,329\,454\,321\,664\,=\,(128\cdot 217^2)^2.
\]
Numerically, $C_{\mathrm{Lean}}\approx 10^{8\,584\,550\,447\,692.749}$.
\label{const:clean}
This constant is not optimized; it is designed to be sufficiently large to provide a
concrete set of parameters for which formal estimates are verified.

The Lean development is a machine-readable version of the proof in this paper.
As argued by Booeshaghi, Luebbert, and Pachter~\cite{BLP}, machine-readable
scientific artifacts are a useful complement to human-readable exposition:
the former can be checked, searched, and composed by software, while the latter
can present the motivation and mathematical ideas in a form suited for people.
For a formal proof to serve this role, however, it is not enough for the Lean
project to build.  One must also know how the formal objects correspond to the
definitions, lemmas, propositions, and theorems in the paper.

To make this correspondence explicit, we developed \texttt{span}.  The tool reads
the LaTeX source of the paper, using ordinary \verb|\label| commands as the
paper-side object identifiers, and indexes the Lean source directly.  On the
paper side, theorem-like environments provide statements, titles, and kinds,
while inline labels record shorter mathematical objects that are naturally
presented in prose.  On the Lean side, declarations are indexed by fully
qualified names, and optional comments may record paper-facing hints, but the
alignment does not depend on a separate blueprint file.  A curated ledger is the
durable bridge between the two sources: it records which paper objects are
realized by which Lean declarations, and it allows many-to-one and one-to-many
correspondences when the human exposition and formal proof have different
granularities.

This ledger is deliberately auditable.  It stores the paper leg, the Lean leg,
the object kind, and the resulting status of each alignment span.  A coherent
span means that the named paper object and Lean declarations exist and have
compatible kinds; a paper half-span records a mathematical object present in the
paper but not formalized; a Lean half-span records formal material with no
paper-facing counterpart; and an incoherent span records a broken or incompatible
alignment.  Thus the ledger is not only a build artifact, but also a concise
accounting of the relation between the proof as read by a human and the proof as
checked by Lean.

Running \texttt{span} checks that every named paper object and every named Lean
declaration still exists, verifies kind compatibility, reports paper-only and
Lean-only objects, and compares the paper order with a dependency-respecting
order.  It can also emit JSON spines for the paper and Lean source, summarize
coverage statistics, compare a current ledger with a previous one, and draw
overlay graphs such as Figure~\ref{fig:span-dependencies}.  These checks are
designed to be deterministic: after an initial human or LLM-assisted alignment,
subsequent edits to either the paper or the Lean project can be checked without
re-solving the semantic matching problem.

Related tools address different parts of this workflow.  LeanBlueprint builds
formalization blueprints from annotated LaTeX sources~\cite{LeanBlueprint};
LeanArchitect extracts blueprint data from annotated Lean code and generates
synchronized LaTeX blueprint content~\cite{LeanArchitect}.  \texttt{span} is
complementary: it records and checks the alignment between an ordinary paper and
a Lean development, using labels and declaration names as stable endpoints.

The same mechanism applies beyond a single paper file and a single Lean file.
A \texttt{span} project may list several LaTeX roots and several Lean modules,
so sections, appendices, or related papers can share one ledger.  Each entry
names its paper source by label and its formal source by declaration name, so
moving a proof to another Lean module or reorganizing the LaTeX file changes the
location of the object but not its identity.  This makes the alignment robust
under ordinary editing, while still exposing real drift, such as a renamed
theorem, a deleted label, or a formal lemma whose role in the paper has changed.
In practice the workflow is small: \texttt{span build} produces the paper and
Lean spines, \texttt{span check} validates the ledger and reports drift, and
\texttt{span graph} renders the aligned dependency graph.  The generated files
are machine-readable; the ledger remains a reviewable source file edited
alongside the mathematical text and Lean code.  Here \texttt{span} finds 17
paper objects and an 18-entry ledger: 16 coherent alignment spans, one paper
half-span corresponding to Proposition~\ref{prop:covering}, one Lean half-span
for formal bookkeeping for the concrete upper bound, and no incoherent entries.

\begin{figure}[htbp]
\centering
\noindent\makebox[0pt][l]{\raisebox{2.05cm}{\textbf{a}}}\hspace{0.55cm}\resizebox{0.96\textwidth}{!}{%
\begin{tikzpicture}[
  font=\footnotesize,
  box/.style={
    draw=CaltechGray!70,
    rounded corners=2pt,
    fill=white,
    align=left,
    inner sep=5pt,
    minimum height=2.05cm
  },
  sourcebox/.style={box,fill=CaltechOrangeLight,text width=4.2cm},
  spanbox/.style={box,fill=CaltechDark!6,text width=4.65cm},
  arrow/.style={-{Latex[length=2.4mm]},draw=CaltechDark,thick}
]
  \node[sourcebox] (tex) at (0,0) {
    \textbf{LaTeX paper}\\[2pt]
    \ttfamily\scriptsize
    \textbackslash begin\{theorem\}\\
    \textbackslash label\{thm:upper\}\\
    ...\\
    \textbackslash end\{theorem\}
  };

  \node[spanbox] (span) at (6.7,0) {
    \textbf{\texttt{span}}\\[-1pt]
    \textbf{ledger}\\[2pt]
    \ttfamily\scriptsize
    thm:upper -> upperBound\\
    prop:covering -> ...\\
    ... -> explicitCutoff\\[1pt]
    \normalfont\scriptsize checks both endpoints
  };

  \node[sourcebox] (lean) at (13.4,0) {
    \textbf{Lean source}\\[2pt]
    \ttfamily\scriptsize
    theorem upperBound\\
    lemma lowerBound\\
    def explicitCutoff\\
    ...
  };

  \draw[arrow] (tex.east) -- node[above,font=\scriptsize] {labels} (span.west);
  \draw[arrow] (lean.west) -- node[above,font=\scriptsize] {declarations} (span.east);
\end{tikzpicture}
}

\vspace{0.75em}
\noindent\makebox[\textwidth][l]{\textbf{b}}\vspace{-1.1em}
\includegraphics[width=\textwidth,height=0.54\textheight,keepaspectratio]{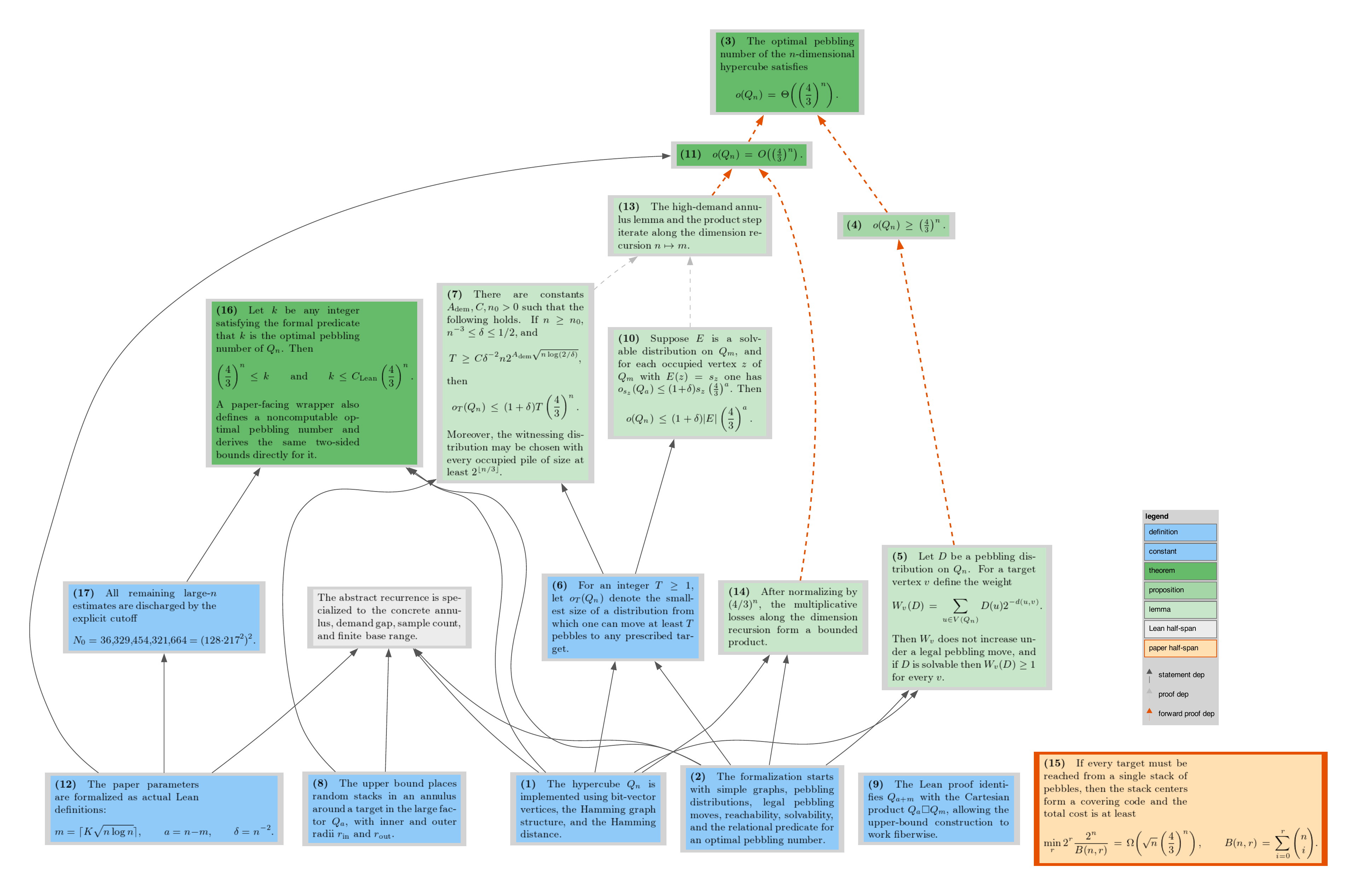}
\caption{The \texttt{span} alignment and proof graph.  (a) The ledger maps
LaTeX labels to Lean declarations; \texttt{span} checks that both sides remain
present and coherent.  (b) The paper-facing graph checked by \texttt{span}.
Nodes are paper objects ordered as a linear extension of the Lean dependency
graph, with LaTeX statements as labels.  A solid edge (\emph{statement dep})
\(Q\to P\) means that the Lean statement owned by \(P\) uses a declaration owned
by \(Q\), so \(P\) cannot be formulated without \(Q\).  A dashed gray edge
(\emph{proof dep}) records a proof reference: \(P\)'s proof cites \(Q\), but its
statement does not depend on \(Q\).  A dashed orange edge (\emph{forward proof
dep}) is such a proof reference whose prerequisite appears later in paper order,
hence a forward proof dependency rather than a statement-order violation.}
\label{fig:span-dependencies}
\end{figure}

\clearpage

\section*{Code and data availability}

The Lean formalization is available at
\url{https://github.com/pachterlab/P_2026_2}.  The \texttt{span} tool for
aligning LaTeX papers with Lean formalizations is available at
\url{https://github.com/pachterlab/span} and as the Python package
\url{https://pypi.org/project/span-lea/}.

\end{document}